\renewcommand{\phi}{\varphi}
\newcommand{\norm}[1]{\left\Vert#1\right\Vert}
\newcommand{\expap}{\exp_{\textup{ap}}}
\newcommand{\abs}[1]{\lvert#1\rvert}
\newcommand{\g}{\gamma}
\renewcommand{\L}{{\mathcal{L}}}
\newcommand{\wh}{\widehat}
\newcommand{\e}{\varepsilon}
\renewcommand{\r}{\rho}
\newcommand{\s}{\sigma}
\newcommand{\wt}{\widetilde}
\newcommand{\ol}{\overline}
\renewcommand{\d}{\delta}
\newcommand{\Eucl}{\textup{Euc}}
\newcommand{\p}{\partial}
\newtheoremstyle{pippo}  % name of the style to be used
  {}       % measure of space to leave above the theorem. E.g.: 3pt
  {}       % measure of space to leave below the theorem. E.g.: 3pt
   {\sffamily}   % name of font to use in the body of the theorem
\newtheoremstyle{pluto}  {}{}
{\slshape}  {}{\bfseries}  {.} {1ex}    {}
\newtheorem{theorem}{Theorem}[section]
\newtheorem{lemma}[theorem]{Lemma}
\theoremstyle{pluto}
\newtheorem{definition}[theorem]{Definition}
\newtheorem{remark}[theorem]{Remark}
\newcommand{\R}{\mathbb{R}}
\newcommand{\N}{\mathbb{N}}
\newcommand{\C}{\mathbb{C}}
\renewcommand{\d}{\delta}
\renewcommand{\t}{\tau}
\renewcommand{\a}{\alpha}
\renewcommand{\b}{\beta}
\DeclareMathOperator{\Lip}{Lip}
\DeclareMathOperator{\Span}{span}
\numberwithin{equation}{section}
\let\oldbibliography\thebibliography
\renewcommand{\thebibliography}[1]{%
  \oldbibliography{#1}%
  \setlength{\itemsep}{0pt}%
}
\titleformat{\section}{%
\normalfont\large\bfseries}{\thesection.}{1em}{}
\titleformat{\subsection}{%
\normalfont\normalsize\bfseries}{\thesubsection.}{1em}{}
\begin{document}

\title{Anisotropic estimates of subelliptic  type
 \thanks{2010 Mathematics Subject Classification. Primary 53C17;
Secondary  35R03, 35A23, 46E35.
Key words and Phrases.  H\"ormander vector fields; subRiemannian distance; subelliptic estimates; approximate exponential map; ball-box Theorem.}}
\author{Annamaria Montanari \and Daniele Morbidelli}

\date{}

\maketitle

% \tableofcontents

\begin{abstract}
We discuss some  estimates of subelliptic type related with vector fields satisfying the H\"ormander condition. 
Our approach makes use of a class of approximate exponentials studied in our previous papers \cite{Morbidelli98,LanconelliMorbidelli,MM04,MM}. 
Such kind of estimates   arises naturally in the study of  regularity theory of weak solutions of degenerate elliptic equations. 
\end{abstract}

% \begin{abstract} 

\section{Introduction}
In this  note we {  review and slightly improve \color{black} some
  estimates of subelliptic type  for a family  $X_1,\dots, X_m$ of smooth vector fields of H\"ormander type in $\R^n$.  We mainly use the analysis of a class of approximate exponential maps appearing in some previous papers   of the authors and of Ermanno Lanconelli.  See \cite{Morbidelli98,LanconelliMorbidelli,MM04,MM}. We
shall formulate our estimates making use of a family  of fractional Sobolev norms modeled on the subRiemannian geometry defined by $X_1, \dots, X_m$. These norms have been analyzed in \cite{Morbidelli98}.

Let  $X_1, \dots, X_m$ be a family of vector fields satisfying  the H\"ormander condition of step $\kappa\in\N$ in~$\R^n$. A classical estimate 
 states that given a bounded set $\Omega\subset\R^n$ and $p\in\left[1,+\infty\right[$, 
 there is a positive constant $C$ such that
\begin{equation}\label{classica} 
 [f] _{W^{ 1/ k ,p }}:=\bigg(\int_{\Omega\times\Omega}
 \frac{|f(x)-f(y)|^p}{|x-y|^{n+p(1/\kappa) }}
 dxdy\bigg)^{1/p}
 \leq C \Big(\|f\|_{L^p}+\sum_j\|X_j f\|_{L^p}\Big), 
\end{equation} 
for all smooth $f$ compactly supported in $\Omega$. Here $[f]_{W^{ 1/ \kappa ,p }}$ denotes the $L^p $ (semi)norm of the fractional 
derivative of order $1/\kappa  $ of $f$.
Classical references for this inequality are  \cite{RothschildStein,FeffermanPhong81}.    Various versions of estimate~\eqref{classica} have been used extensively starting from the seminal paper of H\"ormander \cite{Hormander67} in the theory of  hypoelliptic operators.

Estimate~\eqref{classica} inherently gives  the 
same order~$1/\kappa$ of  Euclidean  fractional differentiability
in all directions, for a function $f\in L^p$ with derivatives $X_1 f , \dots, X_mf\in L^p$. Note that in typical situations of interest in subelliptic analysis,  the dimension of the subspace generated by  $ X_1(x),\dots, X_m(x) $ is strictly less than the topological dimension~$n$.  
It is known that~\eqref{classica} is sharp, as far as we refer to Euclidean fractional derivatives. However, it does not capture the fact that given $f$ in the first order Folland--Stein space,\footnote{The  functional space defined by the norm in the right-hand side of~\eqref{classica}.}  one can expect a better regularity of~$f$ along the directions of commutators of lower order compared with the regularity expected  along the directions of commutators of higher order. This motivates the introduction of a different notion of fractional differentiability, based   on the data of the metric measure space $(\R^n,\L^n,d)$, where $\L^n$ denotes the Lebesgue measure in~$\R^n$ and $d$  \color{black} the subRiemannian distance. Namely, following \cite{Morbidelli98}, given $\Omega\subset\R^n$, we define for any $s\in\left]0,1\right[$ and $p\in\left[1,+\infty\right[ $,  the seminorm
\begin{equation}\label{semin} 
 [f ]_{W^{s,p}_d(\Omega )} :=\bigg(\int_{\Omega\times\Omega}\frac{|f(x)-f(y)|^p}{\L^n(B(x,d(x,y)))d(x,y)^{ps}}dxdy\bigg)^{1/p},
\end{equation} 
where $d$ denotes the subRiemannian distance associated with the vector fields~$X_1,\dots, X_m$. Here  and hereafter,
$B(x,r)$ will denote the ball  of center $x\in\R^n$ and radius $r$ with respect to the subRiemannian distance~$d$.  Observe that, by known      properties in subRiemannian geometry, we have the  local estimates $\L^n(B(x, r))\leq C r^n$ and $d(x,y)\leq C|x-y|^{1/\kappa}$, where $\kappa$ is the step of the vector fields. See~\eqref{upside2} in Section~\ref{Sec2}.  Then 
 we have the trivial local embedding  
 \begin{equation*}
\begin{aligned}
  \|f\|_{L^p(\Omega)}&+\Big( 
\int_{\Omega\times\Omega}\frac{|f(x)-f(y)|^p}{|x-y|^{n+p(s/\kappa)}}dxdy\Big)^{1/p}  
% \\&
\leq C \|f\|_{W^{s,p}_d(\Omega)},
\end{aligned}
 \end{equation*} 
where $\|f\|_{W^{s,p}_d(\Omega)} :=  \norm{f}_{L^p(\Omega)}+    [f ]_{W^{s,p}_d(\Omega )}$ and the positive constant $C$ depends on the bounded set
$\Omega\subset\R^n$   and on $s\in\left]0,1\right[$.

Note also that  the embedding is somehow strict.        In order to explain our comment,  given a bounded set $\Omega$, we define  
the fractional derivative of order $\e \in\left]0,1\right[$ of a function $f$ along a vector field $Z$ as   
\begin{equation}
 [f ]_{W^{\e,p}_Z(\Omega)} :=\bigg(\int_\Omega dx 
  \int_{\{t\in[0,1]\,:\, e^{tZ}(x)\in\Omega\}} 
 \frac{dt}{|t|^{1+p\e}}
 |f(e^{tZ}x)-f(x)|^p  \bigg)^{1/p},
\end{equation}  
 where as usual we denote by $e^{tZ}(x)$  the value at time $t$ of the integral curve of $Z$ starting from $x$ when $t=0$.   

Let us introduce the 
notation $X_w:=[X_{w_1},\dots,[X_{w_{\ell-1}}, X_{w_\ell} ]\dots]$ to denote nested commutators of length $|w|=\ell \leq\kappa$. Then,   
  we  show that given vector fields of step~$\kappa\in\N$, $p\in\left[1,+\infty\right[$ and a triple   $\Omega_1\Subset \Omega_2\Subset \Omega_3$ of  bounded sets, 
  for all $s\in\left]0,1\right[$ we have the equivalence
\begin{equation}\label{equivalgo} 
 C^{-1}\|f\|_{W^{s,p}_d(\Omega_1)}\leq    
  \|u\|_{L^p(\Omega_2)}+\sum_{|w|\le\kappa}[f]_{W^{s/|w|,p}_{X_w}(\Omega_2)}
 \leq C
 \|f\|_{W^{s,p}_d(\Omega_3)}.
\end{equation}
% \color{red} \footnote{ Ho messo insiemi ``allargati''.  Mi viene il dubbio che per essere proprio corretti sia opportuno mettere anche le norme $\|f\|_{L^p}$. Lo scrivo per ora in un modo che di certo e' corretto. Poi ci ripenseremo.}
% \color{black}
This means that given a function $f\in  L^p(\Omega)$ and $s<1$, the seminorm $ [f]_{W^{s,p}_d(\Omega)}$  is finite if and only if $f$ has $s/|w|$ derivatives in $L^p$ along commutators~$X_w$ of length $|w|= 
1,2,3,\dots, \kappa$. Note that in \cite{Morbidelli98} the first author proved the equivalence~\eqref{equivalgo} for commutators $ X_w $ with length~$|w|=1$ only. Here we    show that the argument in \cite{Morbidelli98}  provides also    the inequality 
$$
[f]_{W^{s/|w|,p}_{X_w}(\Omega_2)}
 \leq C
 \|f\|_{W^{s,p}_d(\Omega_3)}
$$
  for  commutators $X_w$ of arbitrary length $|w|\leq \kappa.$

 In Section \ref{Sec3} we shall prove the following anisotropic subelliptic estimate. 
 \begin{theorem}\label{anisotta}
Given H\"ormander vector fields $X_1,\dots, X_m$ in $\R^n$,
for all $p\in\left[1,+\infty\right[$, for all $s \in\left]0,1 \right[$ and for any  pair of nested bounded sets $\Omega\Subset \Omega_0$ there is $C>0$ such that for all $C^1$ function $f$ we have the inequality
\begin{equation}\label{proxx} 
   [f]_{W^{s,p}_d(\Omega)}\leq C \Big(\sum_{j=1}^m \| X_j f\|_{L^p(\Omega_0)} 
   +\|f\|_{L^p(\Omega_0)}\Big).
\end{equation}   \end{theorem}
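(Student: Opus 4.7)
The plan is to exploit the approximate exponential maps from the authors' previous papers (cited in the introduction) together with the ball--box theorem to reparametrize the difference $f(y)-f(x)$ as a telescoping sum of increments of $f$ along integral curves of the generators $X_1,\dots,X_m$. Concretely, for each $x\in\Omega$ and every $y$ with $d(x,y)$ small enough, one can write $y=E_I(x,h)$ for some approximate exponential $E_I$ built from commutators $X_{w_1},\dots,X_{w_n}$ of lengths $|w_i|\le\kappa$ chosen from a local basis, where the anisotropic parameter $h=(h_1,\dots,h_n)$ lies in a box $Q_I(r)$ of sides $\sim r^{|w_i|}$ and satisfies $d(x,y)\sim\rho(h):=\max_i|h_i|^{1/|w_i|}$. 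Two key consequences from \cite{Morbidelli98,LanconelliMorbidelli,MM04,MM} are needed: the Jacobian $|\det dE_I(x,\cdot)|$ is comparable to $\L^n(B(x,\rho(h)))/\prod_i\rho(h)^{|w_i|}$, so that the change of variables $y\mapsto h$ absorbs the weight $\L^n(B(x,d(x,y)))^{-1}$; and each map $E_I(x,h)$ is a composition of a bounded number $N$ of elementary flows $e^{s_k X_{j_k}}$ with parameters $|s_k|\le C\rho(h)$.

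With this setup, one splits the $y$-integration in $[f]_{W^{s,p}_d(\Omega)}^p$ into the region $\{d(x,y)\ge r_0\}$, which is trivially controlled by $\|f\|^p_{L^p(\Omega_0)}$ using the doubling lower bound $\L^n(B(x,r_0))\ge c>0$, and the region $\{d(x,y)<r_0\}$, covered by a finite family of approximate exponentials. On each such piece, writing $\phi_k=e^{s_k X_{j_k}}\circ\cdots\circ e^{s_1 X_{j_1}}$ with $\phi_0=\mathrm{id}$ and $\phi_N(x)=E_I(x,h)$, one telescopes
\[
f(E_I(x,h))-f(x)=\sum_{k=1}^N\bigl[f(\phi_k(x))-f(\phi_{k-1}(x))\bigr]=\sum_{k=1}^N\int_0^{s_k}X_{j_k}f\bigl(e^{\sigma X_{j_k}}\phi_{k-1}(x)\bigr)\,d\sigma,
\]
so that, raising to the $p$-th power and applying Minkowski's integral inequality in the $x$ variable together with the fact that, for $|s_1|,\dots,|s_N|$ small, the maps $x\mapsto e^{\sigma X_{j_k}}\phi_{k-1}(x)$ are diffeomorphisms from $\Omega$ into $\Omega_0$ with uniformly bounded Jacobians, we obtain
\[
\Bigl(\int_\Omega|f(E_I(x,h))-f(x)|^p\,dx\Bigr)^{1/p}\le C\,\rho(h)\,\sum_{j=1}^m\|X_jf\|_{L^p(\Omega_0)}.
\]

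Substituting back into the seminorm, after the change of variables the $\L^n(B(x,d(x,y)))$ factor cancels against the Jacobian of $E_I(x,\cdot)$, leaving a radial integral of the form
\[
\int_{Q_I(r_0)}\frac{\rho(h)^{p}}{\rho(h)^{ps}\prod_i\rho(h)^{|w_i|}}\,dh,
\]
which, after the anisotropic polar change of coordinates $h_i=\rho^{|w_i|}h_i'$, reduces to $\int_0^{r_0}\rho^{p(1-s)-1}\,d\rho$, finite since $s<1$. Summing over the finitely many charts $I$ and combining with the easy large-distance term gives~\eqref{proxx}. The main obstacle is not the differential part of the argument, which is essentially the Minkowski/flow-change-of-variable step, but the geometric bookkeeping: one has to ensure that the covering of $B(x,r)\setminus\{x\}$ by the charts $E_I(x,\cdot)$ is uniform in $x\in\Omega$ and that the Jacobian identity matches the weight $\L^n(B(x,d(x,y)))^{-1}$ exactly. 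Both ingredients are precisely what the ball--box theorem and the approximate exponential calculus developed in the cited works provide.
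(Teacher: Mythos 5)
Your proposal is correct and follows essentially the same route as the paper's own proof: decompose $B(x,r_0)$ into regions where a maximal $n$-tuple $I$ of commutators is selected, change variables $y=E_{I,x}(h)$ using the ball--box theorem, use the Jacobian and volume comparisons together with the lower bound $d(x,E_{I,x}(h))\gtrsim\|h\|_I$ to reduce the weight to $\|h\|_I^{-(\ell(I)+ps)}$, telescope $f(E_{I,x}(h))-f(x)$ along the concatenation of integral curves of $X_1,\dots,X_m$ of total length $\lesssim\|h\|_I$, apply Minkowski's inequality and the uniform bound on the Jacobians of the flows in the $x$-variable, and conclude by the convergence of $\int_{Q_I}\|h\|_I^{-(\ell(I)+ps-p)}\,dh$ for $s<1$. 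The paper organizes the covering via the annular sets $M_{I,x}$ and derives the lower bound $d(x,E_{I,x}(h))\ge(\hat C\hat\e)^{-1}\|h\|_I$ explicitly, but this is the same bookkeeping you allude to; all the substantive ideas coincide.
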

Roughly speaking, the inequality \eqref{proxx} means that  the $L^p$ norm of the derivatives of order $s<1$ in the subRiemannian   metric measure  space $(\R^n,\L^n, d)$      can be estimated from above with the derivatives of order $1$ in the Folland--Stein space. 
This estimate is trivial  in the Euclidean case, see the discussion in 
Remark~\ref{oca}.  
Surprisingly, the proof of the subRiemannian statement \eqref{proxx} becomes less trivial and requires a certain amount of work.

Inequality \eqref{proxx} has been proved in \cite[Theorem 5.1]{Morbidelli98} for $1<p<\frac{Q}{1-s}$ and for  an  appropriate $Q,$
by using some nice properties of the fundamental solution of H\"ormander operators. Here we extend this inequality to every $p\geq 1$ by using a completely different technique, whose main tool is the approximate exponential map.
 The proof of Theorem~\ref{anisotta} will be presented in Section \ref{Sec3} and it is   inspired to the argument  of the Lanconelli's unpublished proof 
 of the nonsharp version  of \eqref{classica}.  See  \cite[Proposition~6.2]{MM}.
 Let us mention that for the case $s=1$ our seminorm~\eqref{semin}
is not useful,  but there is a rich theory of Sobolev spaces   of order~$s=1$    defined on metric measure spaces. See the   Haj{\l}asz spaces \cite{Hajlasz} and the Newtonian spaces \cite{Shanmugalingam}, just to quote a few.

 In the subsequent Section \ref{Sec4}, following~\cite{Morbidelli98},
 we describe the proof of the equivalence~\eqref{equivalgo}. As a corollary, we obtain estimates in the  directions  of commutators.

 Namely, we   will     get  the following statement.
 \begin{theorem}\label{parolone} 
Let $X_1, \dots, X_m$ be H\"ormander vector fields of step $\kappa$ in $\R^n$ and take $p\in\left[1,+\infty\right[$.
Let $s\in\left]0,1\right[$ and consider a  commutator  $X_w$ of length $|w|\leq \kappa$.  Then, given  bounded  open sets $\Omega\Subset \Omega_0$ there is  $C>0$ such that 
  \begin{equation*}
\begin{aligned}
   [f]_{W^{s/|w| ,p}_{X_w} (\Omega)}: = &\bigg(
   \int_\Omega dx \int_{\{t\in[0,1]\,:\,e^{tX_w}(x)\in\Omega\}}  \frac{dt}{|t|^{1+ps/|w|}}
%    \int\limits_{\{ x\in \Omega\,:\, e^{tX_w}(x)\in\Omega \} }
  \big|f(e^{tX_w}x)-f(x)\big|^p dx\bigg)^{1/p}
\\&\leq C\Big(\|f\|_{L^{p}(  \Omega_0)} +\sum_{j=1}^m \|X_j f\|_{L^p(  \Omega_0)} \Big).
\end{aligned}
  \end{equation*} 
 \end{theorem}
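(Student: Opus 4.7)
My plan is to prove Theorem~\ref{parolone} as a direct corollary of two facts already stated in the introduction: the anisotropic subelliptic estimate of Theorem~\ref{anisotta}, together with the right-hand inequality in the equivalence~\eqref{equivalgo}. The latter asserts, in particular, that each commutator-direction seminorm $[f]_{W^{s/|w|,p}_{X_w}(\Omega_2)}$ is controlled by the subRiemannian fractional seminorm $\|f\|_{W^{s,p}_d(\Omega_3)}$ evaluated on a slightly larger set.

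First I would insert an intermediate bounded open set $\Omega'$ with $\Omega \Subset \Omega' \Subset \Omega_0$. Applying the right inequality of~\eqref{equivalgo} with the role of $\Omega_2$ played by $\Omega$ and that of $\Omega_3$ played by $\Omega'$ yields, for any commutator $X_w$ of length $|w|\leq\kappa$,
$$[f]_{W^{s/|w|,p}_{X_w}(\Omega)} \leq \|f\|_{L^p(\Omega)}+\sum_{|v|\leq\kappa}[f]_{W^{s/|v|,p}_{X_v}(\Omega)} \leq C_1 \|f\|_{W^{s,p}_d(\Omega')},$$
the first step being the trivial lower bound by any single positive summand. Next, Theorem~\ref{anisotta} applied to the pair $\Omega'\Subset\Omega_0$ gives
$$\|f\|_{W^{s,p}_d(\Omega')} = \|f\|_{L^p(\Omega')} + [f]_{W^{s,p}_d(\Omega')} \leq C_2 \Big(\sum_{j=1}^m \|X_j f\|_{L^p(\Omega_0)} + \|f\|_{L^p(\Omega_0)}\Big),$$
after absorbing the $L^p$-norm on $\Omega'$ into the one on the larger domain $\Omega_0$. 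Chaining the two bounds with $C=C_1C_2$ produces exactly the inequality stated in the theorem.

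The main (and essentially only) obstacle is the bookkeeping of nested domains, so that both ingredients apply simultaneously; but since Theorem~\ref{anisotta} is formulated for a pair $\Omega\Subset\Omega_0$ and the equivalence~\eqref{equivalgo} for a triple $\Omega_1\Subset\Omega_2\Subset\Omega_3$, a single intermediate set $\Omega'$ suffices and no additional work is required. The substantive content of Theorem~\ref{parolone} therefore lies entirely upstream, in the proof of Theorem~\ref{anisotta} carried out in Section~\ref{Sec3} by means of approximate exponentials, and in the extension of the main result of~\cite{Morbidelli98} to commutators of arbitrary length $|w|\leq\kappa$, which is detailed in Section~\ref{Sec4}.
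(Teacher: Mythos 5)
Your proposal matches the paper's own derivation: the paper states that Theorem~\ref{parolone} follows immediately by combining Lemma~\ref{pippirillo} (which is precisely the right-hand inequality in~\eqref{equivalgo}, proved in Section~\ref{Sec4} via the Rothschild--Stein lifting and a covering argument) with Theorem~\ref{anisotta}, which is exactly your chain of inequalities. You correctly locate the substantive work upstream and handle the nesting of domains in the same way.
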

 The idea of using anisotropic estimates along 
 different directions in a subelliptic 
 context   arises naturally in the study of  pointwise estimates for  weak solutions of  degenerate elliptic equations with measurable coefficients and it was exploited long ago 
 by Franchi and Lanconelli~\cite{FranchiLanconelli84} in the setting of the diagonal vector fields $X_j=\lambda_j\frac{\p}{\p x_j}$, where $j=1,\dots, n$ and $\lambda_1,\dots, \lambda_n$ are suitable functions. 
  Here we formulate a family of anisotropic  inequalities in the setting of H\"ormander vector fields with their commutators. Again, our techniques do not provide a proof of the borderline case~$s=1$.

 To motivate  Theorem \ref{parolone}, let us give a formulation of its  $L^\infty$  version, starting from  a well known fact. Let $X=\p_x+2y\p_t$ anf $Y=\p_y-2x\p_t$ be the vector fields of the Heisenberg group with coordinates $(x,y,t)$. Then, by an easy computation,  we have the exact formula 
\begin{equation}\label{fox} 
 e^{-sY} e^{-sX} e^{sY} e^{s X}(x,y,t)= (x,y, t-4s^2)= e^{s^2 [X,Y]}(x,y,t),
\end{equation} 
for all $(z,t):=(x,y,t)\in\R^3$ and $s>0$. Then, given any regular function $f=f(z,t)$, we have an estimate  of the $\frac 12$-H\"older seminorm of $f$ on a bounded set $\Omega$:
\begin{equation}\label{jolog} 
 \sup_{(z,t)\in\Omega,\; |\t|\leq r_0}  
 \frac{\big| f(e^{\t[X,Y]} (z,t))-f(z,t)\big|}{|\t|^{1/2}}
 \leq C \sup_{(z,t)\in  \Omega_0}\big(|Xf(z,t) |+|Yf(z,t)|\big),
\end{equation} 
where $ \Omega_0\supset\ol\Omega$ and $r_0>0$ is small enough. 

 A natural generalization of  \eqref{fox} to more general  vector fields $X_1,\dots, X_m$ would be   a formula of the following form.
 Let $X_w$  be any commutator of length $|w|\leq\kappa$ constructed from a family $X_1,\dots, X_m$ of H\"ormander vector fields of step~$\kappa$ in $\R^n$.
 Given  a bounded set $\Omega$  
 and $\Omega_0\supset\ol\Omega$ \color{black} there is  $r_0>0$ and  a positive $C$ such that for all nested commutator $X_w:=[X_{w_1},\dots [X_{w_{\ell-1}}, X_{w_\ell}]\dots ]$ of length $|w|\in\{1,2,\dots, \kappa\}$,
we have 
\begin{equation}\label{kiki} 
 \sup_{x\in\Omega ,\;|\tau|\leq r_0} \frac{|f(e^{\tau  X_w}x) - f(x)|}{|\tau|^{1/|w|}}\leq C \sup_{y\in  \Omega_0} \sum_{j=1}^m|X_j f(y)|.
\end{equation} 
If we try to prove \eqref{kiki}  by generalizations of formula~\eqref{fox}, we encounter some remainders  which can not be controlled with elementary methods.
However, estimate~\eqref{kiki} does hold as a consequence of the ball-box Theorem presented in Section~\ref{Sec2}. See the explanation in 
Remark~\ref{march}.

We conclude the Introduction by  remarking that in this paper, for the sake of clarity, we consider smooth vector fields satisfying H\"ormander condition. 
However,
 by the ball-box Theorem in \cite{MM}, we expect that all the results of the present paper can be extended to 
 nonsmooth vector fields of arbitrary step $\kappa\in \N$     and with coefficients in some regularity class related with the step $\kappa$ 
  appearing in the H\"ormander condition.

\section{Preliminaries}\label{Sec2}

Consider  smooth vector fields $  X_1, \dots, X_m $ in $\R^n$.
Given a word $w=w_1\cdots w_\ell$ in the alphabeth $\{1,\dots,m\}$, let us introduce the commutator
\begin{equation*}
%\label{nest} 
X_w := [X_{w_1},         [X_{w_2}, \dots [X_{w_{\ell-1}}, X_{w_\ell}]\dots ].         
\end{equation*}
The number $|w|=|w_1w_2\cdots w_\ell| =:\ell $ is called the \emph{length} of the commutator $X_w$.
Let us define the \emph{subRiemannian distance}
\begin{equation*}%%\label{dicacco}
\begin{aligned}
d (x,y) &:= \inf \big\{r>0: \text{ there is $\gamma\in
\Lip((0,1),\R^n)$
with $\g(0)=x, \g(1)= y$}
\\&
 \qquad\qquad  \qquad\text{and $\dot\gamma(t)= \sum_{1\le j\le m} u_j(t) rX_j  (\gamma(t))$ with $ |u(t)|_{\Eucl}\le
1 $ 
for a.e. $t\in [0,1]$} \big\}.
\end{aligned}
\end{equation*}

Given a fixed $\kappa \ge 1$, denote by
$    Y_1, \dots, Y_q $ an enumeration of $  \{ X_w: 1\le \abs{w} \le \kappa\}$,
the family of commutators of length at most $\kappa$.  Let
$  \ell_j\le \kappa$  be the length of $Y_j$. Define  
the distance $\r$ 
\begin{equation}\label{coscos}
\begin{aligned}
&\text{$\r(x,y)   := \inf \big\{  r\ge 0 :   $ there is $\gamma\in
\Lip((0,1), \R^n)$ such that $\gamma(0)=x$   }
\\&\quad \text
{
$\gamma(1) = y$ and $\dot\gamma(t)= \sum_{j=1}^q  b_j(t) r^{\ell_j}Y_j(\g(t)): \abs{b(t)}_\Eucl\le 1 $ for a.e. $t\in
[0,1]$}\big\}.
\end{aligned}
\end{equation}
The H\"ormander condition of step~$\kappa$ reads as 
$\Span\{X_w(x) : |w|\leq \kappa \}=\R^n$ for all 
$x\in\R^n$.

We denote by  $B_\r(x,r)$, $B (x,r)$ and $B_\Eucl(x,r)$ the balls of center $x$ and radius $r$ with respect to $\r$, $d$ and the Euclidean distance
respectively.   Sometimes, to avoid any confusion, we denote by $|\cdot|_\Eucl$ the Euclidean norm.

Since the vector fields $Y_1,\dots, Y_q$ span $\R^n$ at any point,  it is easy to see that  for all pair of points $x,y\in\R^n$, the set of competitors defining   $\rho(x,y)$ is nonempty and then $\rho(x,y)<+\infty$ for any pair of points. Furthermore, an elementary argument shows that a local estimate of the form  
$
\rho(x,y)\leq C|x-y|^{1/\kappa}                                                                                                                                                                                                                                                                                                                                            $   holds.   Trivially,  by definition, we have $\rho(x,y)\leq d(x,y)$.
The
classical Chow's theorem implies also that  for any pair of points $x$ and $y\in\R^n$ 
the set of competitors defining $d(x,y)$ is nonempty. Then   $d(x,y)<\infty$. Finally, both   $\rho$ and $d $ satisfy the axioms of  a distance.

\paragraph{Ball-box Theorem}
We recapitulate here the statement of the ball-box Theorem. 
Let us consider a family of vector fields $X_1, \dots, X_m$ satisfying the H\"ormander
  condition   of step $\kappa$ in $\R^n$.   
Let us fix an enumeration $Y_1,\dots, Y_q$ of all the commutators 
$X_w$ with  length $|w|\leq \kappa$.
Let  $\ell_i$ be the length of $Y_i.$  If the H\"ormander condition of step $\kappa$
 is fulfilled, then the vector fields $Y_1,\dots, Y_q$ span $\R^n$ at any point.  
Given a multi-index $I=(i_1,\dots,i_n)\in \{ 1,\dots,q\}^n$  and its corresponding  $n-$tuple  $Y_{i_1}, \dots ,Y_{i_n}$, let
\begin{equation}\label{massi}
 \begin{aligned}\lambda_I(x)& =\det (Y_{i_1}(x), \dots ,Y_{i_n}(x)) ,
\quad\text{and}
\quad \ell(I)& = \ell_{i_1}+\cdots +\ell_{i_n}.
\end{aligned} \end{equation}
The first ``ball-box Theorem'' was proved by Nagel Stein and Wainger in  \cite{NagelSteinWainger}. Namely, in that paper, the authors introduced the exponential map related with an $n$-tuple   $I=(i_1, \dots, i_n)\in\{1,\dots,q\}^n$ in the form  
\begin{equation*}
 \Phi_{I,x}(u):=\exp\Big(\sum_{j=1}^n u_j Y_{i_j}\Big)(x),
\end{equation*} 
where $u$ belongs to a neighborhood of the origin in $\R^n$ and $\exp(Z) (x)$ or $e^Z x$ denotes  the value at time $t=1$ of the integral curve of the vector field $Z$ starting from $x\in\R^n$ at $t=0$. Since we are interested in local estimates, we may without loss of generality assume that $e^Z x$ is well defined   
in all situations of our interest.  

To give the statement of the Nagel--Stein--Wainger ball-box Theorem, we introduce a definition. 
 \begin{definition}[$\eta$-maximal triple] 
 Let $I=(i_1, \dots, i_n)\in\{1,\dots,q\}^n$, $x\in\R^n$, $\eta\in\left]0,1\right[$ and $r>0$.  
 We say that $(I,x,r)$ is $\eta$-maximal if
\begin{equation}\label{maxx}
 |\lambda_I(x)| r^{\ell(I)}>\eta \max_{ K \in\{1,\dots,q\}^n} |\lambda_ K (x)| r^{\ell(K)}.
\end{equation}
 \end{definition}
 Define also for all $I\in\{1,\dots, q\}^n$
 \begin{equation}\label{qui} 
\|h\|_I=\max_{j=1,\dots,n}|h_j|^{1/\ell_{i_j} },\qquad   Q_I(r)=\{h\in\R^n:\|h\|_I < r\}. 
\end{equation}
\color{black}
Then, in \cite{NagelSteinWainger} the authors
proved that if $(I,x,r)$ is $\eta$-maximal, $x$ belongs to a compact set and $r$ is sufficiently small, then we have the double inclusion
\begin{equation}
 \Phi_{I,x}(Q_I(c_1 r))\subset B_\rho(x,r)\subset \Phi_{I,x}(Q_I(c_2r)),
\end{equation} 
where the constants $c_1$ and $c_2$ depend on $\eta$ and on the compact set where $x$ lies.  This estimate together with some Jacobian estimates have the consequence that 
 \begin{equation}\label{misura} 
  \L^n(B_\rho(x,r))\simeq \sum_{K\in\{1,\dots, q\}^n}|\lambda_K(x)| r^{\ell(K)},
 \end{equation} 
where the equivalence holds for compact sets and sufficiently small $r$.   We have presented the statements of this part in an informal way. Below we shall give precise statements of similar results which are needed in this paper. \color{black}

After \cite{NagelSteinWainger}, the analysis of the maps $\Phi_I$ was carried out by several authors in subsequent years. See \cite{BramantiBrandoliniPedroni13,TaoWright03,Street}. Since $\ell(I)\geq n$ for all $I$, we always have by~\eqref{misura} the local estimate
$ \L^n(B_\rho(x,r)) \leq C r^n,
$
where  $x$ belongs to a bounded set and $r>0$ is sufficiently small.

 \paragraph{Approximate exponentials of commutators and    the corresponding ball-box Theorem.} 
 In \cite{NagelSteinWainger}, the authors gave also a sketch of the proof of the fact 
 that the distance $\rho$ is equivalent to $d$ locally. This was done introducing a class of maps which we are now going to   call   
 ``approximate exponentials''.  

Consider  vector fields $X_{w_1}, \dots, X_{w_\ell}$, and their commutator $X_w$,   which has length $\ell=|w|$. \color{black} Let us define the approximate exponential $\expap(tX_w) $. For $\t>0,$ we define, as in \cite{NagelSteinWainger}, \cite{Morbidelli98} and \cite{MM},
\[
 \begin{aligned}
 C_\t( X_{w_1})&:= \exp(\t X_{w_1}),
 \\ C_\t( X_{w_1}, X_{w_2})&: =\exp(-\t X_{w_2})\exp(-\t X_{w_1})\exp(\t X_{w_2})\exp(\t X_{w_1}),
 \\&\vdots
  \\C_\t( X_{w_1}, \dots, X_{w_\ell})&
%\\&
:=C_\t( X_{w_2}, \dots, X_{w_\ell})^{-1}\exp(-\t X_{w_1}) C_\t( X_{w_2}, \dots, X_{w_\ell})\exp(\t X_{w_1}). \end{aligned}
 \]
Then let
\begin{equation}
 \expap(t X_w): =\left\{\begin{aligned}
& C_{t^{1/\ell}}(X_{w_1}, \dots, X_{w_\ell} ), \quad &\text{ if $t>0$,}
\\
&C_{|t|^{1/\ell}}(X_{w_1}, \dots, X_{w_\ell} )^{-1}, \quad &\text{ if $t<0$.}
                 \end{aligned}\right.
\label{approdo}
\end{equation}
 By standard ODE theory,  if $x$ belongs to a bounded set~$\Omega_0$,  there is $r_0>0$ so that  if $|t|\leq r_0 $   and $x\in\Omega_0$ the approximate exponential is well defined.

  Let us assume that the system $X_1,\dots, X_m$ has step $\kappa\in\N$ and introduce the family $Y_1,\dots, Y_q$ of their nested commutators of length at most~$\kappa$.  \color{black}  Define, given $ I=(i_1,\dots,i_n)\in\{1,\dots,q\}^n$, for $x\in K$ and $h\in\R^n$,  $ h  $ in a neghborhood of the origin
     \begin{equation}
 \label{hhh}E_{I,x}(h)=\expap (h_1 Y_{i_1})\cdots \expap (h_n Y_{i_n})(x).
   \end{equation}

  \begin{theorem}[Ball-box]\label{scatola}
Let $X_1,\dots, X_m$ be H\"ormander vector fields of step $\kappa$ in~$\R^n$. Fix an open bounded set $\Omega_0\subset\R^n$. Then there is $r_0>0$ such that 
for all $\eta\in\left]0,1\right[$ 
there are constants $\e_\eta<1< C_\eta$ such that 
for any     $\eta$-maximl triple $(I,x, r)$ with $r\leq r_0$ and $x\in \Omega_0$, we have
\begin{enumerate}[nosep,label=(\roman*)]
\item the map $h\mapsto E_{I,x}(h)$ is one-to-one on the box $Q_I( \e_\eta r)$ defined in~\eqref{qui};
 \item we have the inclusion 
 \begin{equation}\label{imbianchino} 
E_{I, x}(Q_I(\e_\eta r))\supseteq B_\rho(x, C_\eta^{-1}r);                                                                         \end{equation}  
 \item The Jacobian of the map $E_{I,x}$ admits the following estimate
 \[
C_\eta^{-1}|\lambda_I(x)|r^{\ell(I)}\leq \Big|\det\dfrac{\p E_{I,x}(h)}{\p h}\Big|\leq C_\eta |\lambda_I(x)|r^{\ell(I)} \quad \text{ for all $h\in Q_I(\e_\eta r )$.}
 \]
\end{enumerate} 
  \end{theorem}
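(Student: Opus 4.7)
The plan is an anisotropic rescaling argument combined with a quantitative inverse function theorem, in the spirit of Nagel--Stein--Wainger but using the approximate exponentials $\expap$ of~\eqref{approdo} in place of true exponentials. The $\eta$-maximality hypothesis will play the decisive role of providing uniform constants in $\eta$.

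The first ingredient is a first-order expansion of $\expap$. Applying the Baker--Campbell--Hausdorff identity inductively inside the nested group commutator $C_\t$, the lower-order BCH terms cancel pairwise, yielding $C_\t(X_{w_1},\ldots,X_{w_\ell}) = \exp(\t^\ell X_w + O(\t^{\ell+1}))$. Hence for every commutator $X_w$ of length $\ell=|w|$,
\begin{equation*}
\expap(tX_w)(y) = y + tX_w(y) + R_w(t,y), \qquad |R_w(t,y)| + |\partial_y R_w(t,y)| \le C|t|^{1+1/\ell},
\end{equation*}
uniformly for $y$ in a compact set and $|t|$ small.

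Next, introduce the anisotropic dilation $\d_r^I\colon \R^n\to\R^n$ given by $(\d_r^I h)_j := r^{\ell_{i_j}} h_j$, which carries the unit cube onto $Q_I(r)$, and set $\tilde E(h) := E_{I,x}(\d_r^I h)$. Telescoping the composition in~\eqref{hhh} and inserting the above expansion in each factor gives
\begin{equation*}
\tilde E(h) = x + \Lambda(x,r)\,h + \tilde R(h;x,r), \qquad |h|_\infty\le 1,
\end{equation*}
where $\Lambda(x,r)$ is the $n\times n$ matrix whose $j$-th column is $r^{\ell_{i_j}} Y_{i_j}(x)$, so that $\det\Lambda = r^{\ell(I)}\lambda_I(x)$. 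The remainder $\tilde R$ carries one extra factor $r^\a$ (some $\a>0$) relative to the leading term, by the $|t|^{1+1/\ell}$ bound on $R_w$. The $\eta$-maximality now enters via Cramer's rule in the basis $\{r^{\ell_{i_j}} Y_{i_j}(x)\}_{j=1}^n$: any other vector $r^{\ell_k} Y_k(x)$ is expressed with coordinates bounded by ratios $|\lambda_K(x)|r^{\ell(K)}/|\lambda_I(x)|r^{\ell(I)}\le 1/\eta$, hence $\|\Lambda^{-1}\partial_h\tilde R\|\le Cr^\a/\eta$, which is at most $1/2$ on $\{|h|_\infty\le\e_\eta\}$ once $r\le r_0(\eta)$.

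A standard quantitative inverse function theorem (Banach fixed point for $\Lambda^{-1}\tilde E$) then yields injectivity on that cube, giving statement~(i), and the Jacobian pinching $|\det\partial_h\tilde E(h)|\in[\tfrac12,\tfrac32]\,|\det\Lambda|$ on the same cube, giving~(iii) after undoing $\d_r^I$. For the covering~(ii), I would lift a $\rho$-subunit path $\g$ realizing the $\rho$-distance, i.e.\ $\dot\g = \sum_k b_k r^{\ell_k} Y_k(\g)$ with $|b|_\Eucl\le 1$, to a path $h(t)$ in the unit cube via $\g(t)=E_{I,x}(\d_r^I h(t))$, using the $\eta$-maximal basis representation (now transported along $\g$) to bound $|\dot h(t)|_\infty\le C_\eta$; then $h(t)$ remains in $Q_I(\e_\eta)$ for all $t\le C_\eta^{-1}$, proving~\eqref{imbianchino}. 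The main obstacle I foresee is bookkeeping: controlling the composition remainder uniformly through all $n$ factors, and verifying that the $\eta$-maximality remains in force, in a quantitative form, at nearby points along the lifted path rather than only at the base point~$x$.
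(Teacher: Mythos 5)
The paper does not actually prove Theorem~\ref{scatola}: it states it and cites \cite{Morbidelli98,MM,MMPotential} for the proof, so there is no in-text argument to compare with. Your outline does follow the same general strategy as those references and as Nagel--Stein--Wainger (BCH expansion of the nested-commutator map, anisotropic rescaling, quantitative inverse function theorem, and path lifting for the covering inclusion), so the architecture is right. But as written there is a real gap at the central step.

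The problem is the way the remainder is fed through $\Lambda^{-1}$. From the estimate you display, $|R_w(t,y)|+|\partial_y R_w(t,y)|\le C|t|^{1+1/\ell}$, the $j$-th column of $\partial_h\tilde R$ is only known to be $O(r^{\ell_{i_j}+1})$ as a \emph{raw vector} in $\R^n$. On the other hand,
\[
(\Lambda^{-1}\partial_h\tilde R)_{kj}
=\frac{\det\bigl(\Lambda^{(k)}[(\partial_h\tilde R)_{\cdot,j}]\bigr)}{\det\Lambda},
\]
where $\Lambda^{(k)}[v]$ replaces the $k$-th column of $\Lambda$ by $v$. For a generic vector $v$ of size $r^{\ell_{i_j}+1}$ this quotient is only bounded by $C\,r^{\ell_{i_j}+1-\ell_{i_k}}/|\lambda_I(x)|$, which can carry a \emph{negative} power of $r$ when $\ell_{i_k}>\ell_{i_j}+1$ and blows up as $|\lambda_I(x)|\to 0$. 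The $\eta$-maximality condition~\eqref{maxx} never bounds $|\lambda_I(x)|$ from below; it only controls the ratios $|\lambda_K|r^{\ell(K)}/(|\lambda_I|r^{\ell(I)})$. The Cramer's-rule bound by $1/\eta$ that you invoke is valid for expanding $r^{\ell_k}Y_k(x)$ in the basis $\bigl\{r^{\ell_{i_j}}Y_{i_j}(x)\bigr\}$, but nothing in your displayed remainder estimate puts the columns of $\partial_h\tilde R$ in that privileged form.

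The missing ingredient --- which is precisely the content of the NSW/Morbidelli proof --- is the structured expansion
\[
\partial_{h_j}E_{I,x}(h)=\sum_{k=1}^q f_{j,k}(h)\,Y_k\bigl(E_{I,x}(h)\bigr),
\qquad f_{j,k}(h)=\delta_{i_j,k}+O\bigl(\|h\|_I^{(\ell_k-\ell_{i_j})_+ +1}\bigr),
\]
obtained from Campbell--Hausdorff by keeping the error organized by commutator length (commutators of length $>\kappa$ are rewritten as smooth combinations of the $Y_k$'s and acquire extra powers of $\|h\|_I$). Only with this does the $\eta$-maximality inequality $|\lambda_K|r^{\ell(K)}\le \eta^{-1}|\lambda_I|r^{\ell(I)}$ apply term by term in the cofactor expansion, giving $\|\Lambda^{-1}\partial_h\tilde R\|\le C\,\e_\eta/\eta$ on $Q_I(\e_\eta r)$ without any hidden $1/|\lambda_I(x)|$. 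Similarly, in your argument for~(ii) the phrase ``the $\eta$-maximal basis representation now transported along $\gamma$'' conceals a necessary lemma: one must prove that $(I,\gamma(t),c\,r)$ remains $\eta'$-maximal with a degraded but fixed $\eta'$ for $\gamma(t)\in B_\rho(x,C_\eta^{-1}r)$, which is a Gronwall/doubling estimate on $\lambda_I$, not mere bookkeeping. These two points are the substance of the theorem, and the proposal as written would not get past them.

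Finally, a smaller remark on quantifier order: in the statement $r_0$ is chosen \emph{before} $\eta$, so the smallness needed for the fixed-point argument must be absorbed into $\e_\eta$ (shrinking the cube), not into $r_0(\eta)$ as your last inequality suggests.
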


Theorem \ref{scatola} has been proved and used in various regularity 
conditions in~\cite{Morbidelli98,MM,MMPotential}. In this paper we will work in the smooth case and we shall  choose always $\eta=\frac 12$, to make statements clean. 

  A first consequence of Theorem~\ref{scatola} is the following volume estimate. For all bounded set $\Omega\subset\R^n$ there is $r_0>0$ and $C>0$
such that
\begin{equation}\label{volumone} 
 C^{-1}\L^n(B(x,r))\leq    \max_{K\in\{1,\dots, q\}^n}|\lambda_K(x)| r^{\ell(K)}  \color{black}
 \leq C
\L^n(B(x,r))\quad \text{for all  $x\in\Omega$ and $r<r_0$.}
\end{equation} 
A couple of further consequences  are the estimates
\begin{equation}
 \label{upside2}
\begin{aligned}
&  \L^n(B(x,r))\leq Cr^n \quad\text{for all $x\in\Omega$ and $r<r_0$;}
\\& d(x,y)\leq C|x-y|^{1/\kappa}
\quad\text{for all $x,y\in\Omega$. }
\end{aligned}
\end{equation}
\begin{remark}\label{effeelle} 
 Let us observe  the following  consequence of Theorem \ref{scatola} and of the construction of the maps $E_{I, x}$.  Under the hypotheses of Theorem~\ref{scatola}, for any open bounded 
set $\Omega\subset\R^n$ there are $r_0>0$ and $C_0>0$ so that any pair of points $x,y\in\Omega 
$ with $|x-y|<r_0 $ can be connected with a piecewise integral curve of the 
vector fields $\pm X_1,\dots, \pm X_m$. The number of pieces is bounded by an universal algebraic constant $M$ depending on $m$ and $\kappa$, 
while each piece has   length $\leq C \r (x,y)$. (Recall that by definition we always have $\r(x,y)\leq d(x,y)$). 
% This in particular tells that the Franchi-Lanconelli distance introduced 
% in~\cite{FL},
% \begin{equation*}
% \begin{aligned}
% d_{\textup{FL}}(x,y) = \inf\Big\{ &  \sum_{j=1}^p |t_j| : 
% y= e^{t_p X_{i_p}}\cdots e^{t_2 X_{i_2}}e^{t_1 X_{i_1}}x, 
% \; p\in\N,\quad  X_{i_j}\in \{ X_1, \dots, X_m\}\Big\}
% \end{aligned}
% \end{equation*}
% is locally equivalent to  both the distances $d$ and $\r$.  
\end{remark}
\color{black}

In the following remark, we go back briefly to the discussion of the introduction, concerning estimate \eqref{kiki}.
\begin{remark}\label{march}  
 Let $X_w$  be any nested commutator of length $|w|\leq\kappa$ constructed from a family $X_1,\dots, X_m$ of H\"ormander vector fields of step~$\kappa$ in $\R^n$.
 Given  a bounded set $\Omega$   
 and $\Omega_0\Supset \Omega$, there is~$r_0>0$ and  a positive $C$  such that \color{black}
\begin{equation}\label{kiki2} 
 \sup_{x\in\Omega ,\;|\tau|\leq r_0} \frac{|f(e^{\tau  X_w}x) - f(x)|}{|\tau|^{1/|w|}}\leq C \sup_{y\in \Omega_0} \sum_{j=1}^m|X_j f(y)|.
\end{equation} 
% If we try to prove estimate \eqref{kiki}  by generalizaions of formula~\eqref{fox}, we encounter some remainder  which can not be controlled with elementary methods.
% However, estimate \eqref{kiki} does hold as a consequence of inclusion~\ref{imbianchino}. Indeed,
To check \eqref{kiki2}, it suffices to observe that, by definition of $\rho$, we have that   $e^{\t X_w}(x)\in B_\rho (x,  2|\t|^{1/|w|})$ for all $x\in\R^n$ and $\t\in\R$.   Note explicitly that we must 
use here the distance $\r$ defined  using commutators.
Then, by Remark~\ref{effeelle}, we conclude that if $x\in\Omega$ and $|\tau|< r_0$ for small $r_0$, 
\begin{equation*}
 |f(e^{\tau  X_w}x) - f(x)|\leq C\sup_{\Omega_0 }{\sum_j|X_jf|}\cdot |\tau|^{1/|w|}.
\end{equation*}
  Informally speaking, if we choose  $r_0$  small   enough, then $\Omega_0$ can be made a small open neighborhood of~$\ol\Omega$.  
\end{remark}

 \section{Anisotropic subelliptic estimates}\label{Sec3}

% Fractional derivative
%  \begin{equation}
% [u]_{W^{s,p}(\Omega)}  :=\int_\Omega dx\int_\Omega dy\frac{|u(x)-u(y)|^p}{\L^n (B(x,d(x,y))d(x,y)^{ps}}            \end{equation}  
In this section we prove   inequality \eqref{equivalgo} and consequently  Theorem \ref{anisotta}. Namely we show the local estimate
\begin{equation}\label{trocco} 
 [f]
 _{W^{s,p}_d (\Omega)}\leq C \Big(\sum_{j=1}^m \| X_j u\|_{L^p(\Omega_0)}
  +  \|   u\|_{L^p(\Omega_0)}\Big),
\end{equation} 
for all $f\in C^1$.
Here $\Omega$ is 
bounded, $\Omega_0\supset\ol\Omega$,  $p\in\left[1,+\infty\right[$ and   $s\in\left]0,1\right[$.
%  \begin{theorem}\label{aniso}
%   For all  we have 
%   \begin{equation}
%    [u]_{W^{s,p}(\Omega)}\leq C_s\sum_{j=1}^m \| X_j u\|_{L^p(\Omega)}
%   \end{equation} 
%  \end{theorem}

\begin{remark}\label{oca}  Let us look inequality \eqref{trocco} in the Euclidean case. 
Roughly speaking, it says that derivatives of order $s<1$ in $L^p$ can be estimated with  derivatives of order $1$ in $L^p$. The inequality is essentially trivial. 
Namely, in order to get the estimate  
 \begin{equation*}
%  \label{trix} 
  \int\limits_{\substack{\Omega\times\Omega\\ |x-y|< r_0}} \frac{|u(x)-u(y)|^p}{|x-y|^{n+ps}}dxdy \leq C \int_{\Omega_0} |
   \nabla  f(x)|^p dx,
 \end{equation*}
 where $\Omega_0$ is an open neighborhood of $\Omega$ depending on $r_0$, 
 it suffices to apply the fundamental theorem of calculus and then Minkowski inequality
 \begin{equation*}
\begin{aligned}
&
\int_\Omega dx\int_0^{r_0} \frac{dh}{|h|^{n+ps} } |u(x)-u(x+h)|^p dx  =\int_0^{r_0}\frac{dh}{|h|^{n+ps} }
\int_\Omega dx \Big|\int_0^{|h|}  \Big|\nabla f\Big (x+t\frac{h}{|h|}\Big)\Big| dt\Big|^p 
\\&
\leq \int_0^{r_0} \frac{dh}{|h|^{n+ps}}\Big\{ \int_0^{|h|}dt
\Big[ \int_{\Omega} 
\Big|\nabla f\Big (x+t\frac{h}{|h|}\Big)\Big|^p dx \Big]^{1/p}\Big\}^p
\\&
\leq C\int_0^{r_0}\frac{dh}{|h|^{n+ps}}C|h|^{p}\int_{\Omega_0}|\nabla f(z)|^p dz  \leq C 
\int_{ \Omega_0}|\nabla f |^p, 
\end{aligned}   \end{equation*}
because the integral  in $dh$ converges for $s<1$.  
The argument of the   inequalities above is based on integration of $f$ along the curves $t\mapsto\gamma(t):= x+t\frac{h}{|h|}$,
which is not available in the H\"ormander setting. The  machinery of the approximate exponentials $E_{I,x}$ makes possible to prove inequality~\eqref{trocco} following the scheme of the chain of inequalities above. 
\end{remark}

\begin{proof}

 Before starting the proof, 
 we recall some useful consequences  of the ball-box Theorem for vector fields stated in Section~\ref{Sec2}.  We use below 
the statement with   $\eta=\frac 12$  of Theorem \ref{scatola} and
we let $ \wh C:=C_{1/2}$ and $\wh \e:=\e_{1/2} $.
 
   Let  $\Omega$ be a bounded open set and let $r_0>0$ be a small number so that Theorem~\ref{scatola} applies. 
  Let us define, given $x\in\Omega$ and $I\in\{1,\dots, q\}^n$, the set 
 \begin{equation*}
%  \label{anello}
     M_{I, x}:=\Big\{y\in\R^n: d(x,y)  <  r_0\quad\text{ and  $(I, x, \wh C d(x,y))$ is $\frac 12$-maximal}\Big  \}.
  \end{equation*}
 It is easy to see that the set $     M_{I, x}$ is a metric annulus of the form
 \begin{equation*}
     M_{I, x}=\Big\{y\in\R^n: r_{I,x}<
     d(x,y)<R_{I,x} \Big  \}.
 \end{equation*}
 We have defined for all $I\in\{1,\dots,q\}^n$ and $x$ an open set  
 $M_{I, x}$ which can be empty. Furthermore, different  choices of $I$ can give 
 overlapping annuli.   Finally,   $\cup_IM_{I,x}=B(x,r_0)$. 
 The radius $R_{I,x}$ satisfies also the condition
\begin{equation*}
|\lambda_I(x)| (\wh C R_{I, x})^{\ell(I)} \geq 
\frac 12 \max_{ K\in\{1,\dots,q\}^n} 
|\lambda_K(x)| (\wh C R_{I, x})^{\ell(K)}.
 \end{equation*}
Therefore, Theorem \ref{scatola} gives the estimates
% \begin{subequations}
\begin{align}
&\label{laprim} \frac{1}{\wh C}|\lambda_I(x)| \leq \Big|\det\frac{\p E_{I,x}(h)}{\p h}\Big| \leq
\wh C |\lambda_I(x)|\qquad \text{if $\| h\|_I\leq \wh\e\wh C R_{I,x}$}
\\& B(x,R_{I,x})\subset E_{I,x}(Q_I(\wh C\wh\e R_{I,x}))
\\& h\mapsto E_{I,x}(h) \text{ is one-to-one on $Q_I(\wh C\wh\e R_{I,x})$.}
\end{align}
% \end{subequations}
Recall again that $Q_I(\wh C\wh\e R_{I,x}):= \{h\in\R^n:\| h\|_I\leq \wh\e\wh C R_{I,x}\}$.
On the set $M_{I,x}$ we also have a lower estimate on the distance $d(x,y)$ in terms of the variable $h$. To get
this bound, let us consider 
a point $y\in M_{I,x}$ and  choose any number $\r \in \left]d(x,y),R_{I,x}\right[$. For any such choice of $\r$, we have obviously $y\in B(x,\r)$. Furthermore, 
the triple $(I, x, \wt C\rho)$ is $1/2$-maximal. 
Then we  can write  $y= E_{I,x}(h)$  for a unique $h=E_{I,x}^{-1}(y)\in 
Q_I(\wh C\wh \e \r)
$.
Therefore, given $y\in B(x,\rho)$, the unique $h$ satisfying $E_{I,x}(h)=y$ belongs to $Q_I(\wh C\wh \e \r)$, i.e. satisfies $\|h\|_I\leq \wh C\wh \e\r$. Thus,
\begin{equation*}
\|h\|_I=\|E_{I,x}^{-1}(y) \|\leq \wt C\wt \e\r, 
\quad \text{for all $y\in M_{I,x}$ and   $\rho\in\left]d(x,y), R_{I,x}\right[=
\left]d(x,E_{I,x}(h)), R_{I,x}\right[$.}
\end{equation*}
Letting $\rho\searrow  d(x,E_{I,x}(h))$, we get the useful lower bound
\begin{equation}\label{lowerbound}
 d(x,E_{I,x}(h))\geq (\wh C\wh \e)^{-1}\|h\|_{I}\quad\text{for all $h\in E_{I,x}^{-1}(M_{I,x})$.}
\end{equation}

% \begin{proof}
Let now  $p\in\left[1,+\infty\right[$ and  $s\in\left]0,1\right[$. Then  
we start with the estimate  
  \[
\begin{aligned}
%  [f]_{W^{s,p}(\Omega)}&=
 \int_\Omega dx & \int_{B(x,r_0)\cap \Omega}\frac{|f(x)-f(y)|^p}{d(x,y)^{ps}\L^n(B(x,d(x,y)))}dy
\\&\leq \sum_{I\in\{1,\dots, q\}^n} \int_\Omega dx\int_{M_{I,x}\cap\Omega} \frac{|f(x)-f(y)|^p}{d(x,y)^{ps}\L^n(B(x,d(x,y))) }dy.
\end{aligned}
\]
By  the change of variable $y=E_{I,x}(h) \in M_{I,x}\cap\Omega $, 
the last integral is 
\[ 
\sum_I \int_\Omega dx\int_{E_{I,x}^{-1}(M_{I,x}\cap\Omega)}\frac{|f(x)-f(E_{I,x}(h))|^p
}{
d(x,E_{I,x}(h))^{ps} \L^n(B(x,d(x,E_{I,x}(h))))}  \Big|\det\frac{\p E_{I,x}(h)}{\p h}\Big|  \,dh
\]
\[
\leq C \sum_I \int_\Omega dx\int_{E_{I,x}^{-1}(M_{I,x}\cap\Omega)}\frac{|f(x)-f(E_{I,x}(h))|^p}{\|h\|^{\ell(I)+ps}_I} dh=:(*)
\]
 where we have used the  equivalence $\L^n(B(x,r))\simeq\max_K|\lambda_K(x)|r^{\ell(K)}$, the 
$\frac12$-maximality of $I$ and estimates~\eqref{lowerbound} and~\eqref{laprim}. Note also that $E_{I,x}^{-1}(M_{I,x}\cap\Omega)\subset Q_I(\wh C\wh \e R_{I,x})\subset Q_I(\wh C\wh \e r_0)$.    In the previous chain of inequalities we  denoted $E_{I,x}^{-1}=\Big(E_{I,x}\big|_{Q_{I }(\wh C\wh\e R_{I,x })}\Big)^{-1}$ 
\color{black}

Next recall that we can write in an obvious way
\begin{equation}
E_{I,x}(h) =\expap(h_1 Y_{i_1})\cdots \expap(h_n Y_{i_n}) =:\gamma_{I,x,h}(T_I(h)),	
\end{equation}
where the curve $t\mapsto\gamma_{I,x,h}(t)$ is parametrized on the interval $[0, T_I(h)]$ and by construction of $\expap$ it
 is a concatenation of integral curves of the vector fields $X_1,\dots, X_m$. Furthermore $|T_I(h)|\leq C\|h\|_I$ for some absolute constant~$C$. The map $x\mapsto \gamma_{I,x,t}$ is a change of variable by classical properties of flows of ODEs 
(see the discussion in~\cite{LanconelliMorbidelli})  and 
\[
C^{-1}\leq \Big| \det\frac{\partial  \gamma_{I,x,h}(t)}{\p x} \Big| \leq C,
\]
uniformly in $x$ on compact sets, $|h|< r_0$ and $t\in [0, T_I(h)]$.

Then, letting $|Xf|=|(X_1f, \dots, X_m f)|$ we get 
\[
\begin{aligned}
(*)&\leq \sum_I\int_\Omega dx\int_{Q_I(\wh C\wh \e R_{I,x})}\frac{dh}{\|h\|_I^{\ell(I)+ps}}
\Big|    \int_0^{T_I(h)}    |Xf(\gamma_{I,x,h}(t))|dt\Big|^p
\\ &
\leq C\sum_I \int_{Q_I(\wh C\wh \e r_0 )} \frac{dh}{\|h\|_I^{\ell(I)+ps}}\int_\Omega dx
\Big|    \int_0^{T_I(h)}    |Xf(\gamma_{I,x,h}(t))|dt\Big|^p
\\&
\leq C
\sum_I \int_{Q_I(\wh C\wh \e r_0 )} \frac{dh}{\|h\|_I^{\ell(I)+ps}}
\Big\{  
\int_0^{T_I(h)}  dt \Big[\int_\Omega 
          |Xf(\gamma_{I,x,h}(t))|^p dx
  \Big]^{1/p}  
\Big\}^p
\\&
\leq C\sum_I \int_{Q_I(\wh C\wh \e r_0 )} \frac{dh}{\|h\|_I^{\ell(I)+ps}}\|h\|_I^p \int_{\Omega_0}|X f(z)|^p dz
=C\int_{\Omega_0}|X f|^p,  
\end{aligned}
\]
as required.
We have used the  fact that for all $I\in\{1,2,\dots,q\}^n$,
\[
\int_{\{\|h\|_I<1\}}\frac{dh}{\|h\|_I^{\ell(I)+ps-p}}<\infty , \text{ for all $s<1$,}
\]
which can be proved by a standard decomposition as a disjoint union of  sets of the form $ \{2^{- k }<\|h\|_I\leq 2^{-k+1}\}$ with $k\in\N$.
 \end{proof}
 
\section{Estimates along commutators}\label{Sec4}

In this section we prove Theorem \ref{parolone}. Namely given H\"ormander vector fields of step~$\kappa$, $s<1$ and $1\leq p<\infty$, we show the estimate 
  \begin{equation}\label{hojo} 
 \int_0^{r_0}\frac{dt}{|t|^{1+ps/|w|}}\int_\Omega 
  \big|f(e^{tX_w}x)-f(x)\big|^p dx
\leq   C\Big( \sum_{j=1}^m \|X_j f\|_{L^p(  \Omega_0)} +\|  f\|_{L^p(  \Omega_0)} 
\Big)^p, 
  \end{equation} 
for any nested commutator
 $X_w$ with $|w|\leq \kappa$.  
 Here, as usual $\Omega$  is a bounded open set, $r_0$ is a suitable small constant and $\Omega_0\Supset \Omega$ is an enlarged set.   As we already observed, we do not reach the optimal exponent  $s=1$.
 An estimate in the same spirit was proved   by Franchi and Lanconelli
in \cite[Theorem]{FranchiLanconelli84} for diagonal vector fields.

The proof of the inequality~\eqref{hojo} is an immediate consequence  of the  Lemma~\ref{pippirillo}   
and  of Theorem~\ref{anisotta}.  
\begin{lemma}\label{pippirillo} Let $p\geq 1$. Given H\"ormander vector fields $X_1,\dots, X_m$
of step~$\kappa$, for any nested commutator~$X_w$ with $|w|\leq \kappa$,  for each $s\in\left]0,1\right[$ and 
% for any $x_0\in\R^n$ there are neighborhoods $\Omega\subset\subset\wt\Omega$ of $x_0$ in~$\R^n$, $r_0>0$ and a positive constant  $C$
% such that
for any $x_0\in\R^n$ there is a neighborhood $\Omega $ of $x_0$ in $\R^n$ such that given $\wt \Omega \Supset  \Omega$  there is $r_0>0$ and $C>0$ such that \color{black}
\begin{equation}\label{hoho} 
\int_0^{r_0}\frac{dt}{|t|^{1+ps/|w|}}\int_\Omega 
  \big|f(e^{tX_w}x)-f(x)\big|^p dx\leq
  C \int\limits_{\wt\Omega}dx \int\limits_{\wt\Omega}dy \frac{|f(x)-f(y)|^p}{d(x,y)^{ps}\L^n(B(x, d(x,y)))}
\end{equation}
for any $C^1$ function $f$. 
\end{lemma}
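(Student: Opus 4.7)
The plan is to dominate $|f(e^{tX_w}x) - f(x)|^p$ by an average over an intermediate metric ball of radius comparable to $|t|^{1/|w|}$, and then use Fubini to trade the one-parameter $t$-integration for the two-variable integration on the right-hand side.

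First I fix the scale. By Remark \ref{march} and the local equivalence $\rho\simeq d$ (a consequence of the ball-box theorem applied to both families), there is $c_0>0$ so that $d(x,e^{tX_w}x)\leq c_0|t|^{1/|w|}$ for $x\in\wt\Omega$ and $|t|\leq r_0$ small. Set $R(t):=2c_0|t|^{1/|w|}$, so $d(x,e^{tX_w}x)\leq R(t)/2$. Shrinking $r_0$, ensure also that $e^{tX_w}x\in\wt\Omega$ for $x\in\Omega$. Then, averaging over $B(x,R(t))$ and using $(a+b)^p\leq 2^{p-1}(a^p+b^p)$,
\begin{equation*}
 |f(e^{tX_w}x)-f(x)|^p
 \leq \frac{2^{p-1}}{\L^n(B(x,R(t)))}\int_{B(x,R(t))}\bigl(|f(x)-f(y)|^p+|f(e^{tX_w}x)-f(y)|^p\bigr)\,dy.
\end{equation*}

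For the first (symmetric) piece I apply Fubini. For fixed $x,y$, the condition $y\in B(x,R(t))$ is $t\geq T(x,y):=(d(x,y)/(2c_0))^{|w|}$. Since $R(t)\geq d(x,y)$ throughout this range, $B(x,d(x,y))\subseteq B(x,R(t))$ and hence $\L^n(B(x,R(t)))\geq \L^n(B(x,d(x,y)))$. Extending the $t$-integral to $+\infty$ gives
\begin{equation*}
 \int_{T(x,y)}^{r_0}\frac{dt}{t^{1+ps/|w|}\L^n(B(x,R(t)))}\leq \frac{1}{\L^n(B(x,d(x,y)))}\cdot \frac{|w|}{ps}\,T(x,y)^{-ps/|w|}=\frac{C}{d(x,y)^{ps}\L^n(B(x,d(x,y)))},
\end{equation*}
which is exactly the kernel of the right-hand side of \eqref{hoho}.

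For the second piece I perform the change of variable $z:=e^{tX_w}x$. Since $X_w$ is a smooth vector field, its flow is a diffeomorphism with Jacobian uniformly bounded above and below for $|t|\leq r_0$ and $x$ on a compact set, and maps $\Omega$ into $\wt\Omega$. Moreover $d(z,e^{-tX_w}z)\leq R(t)/2$, so $B(x,R(t))\subseteq B(z,2R(t))$, and the local doubling $\L^n(B(z,2r))\leq C\L^n(B(z,r))$ (immediate from \eqref{volumone}) combined with $B(z,R(t))\subseteq B(x,2R(t))$ gives $\L^n(B(x,R(t)))\simeq \L^n(B(z,R(t)))$. Therefore the second piece is dominated by an expression of exactly the same form as the first one with $x$ replaced by $z$ and $R(t)$ by $2R(t)$, and the Fubini argument above applies verbatim to bound it by the right-hand side of \eqref{hoho}.

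The main obstacle is the bookkeeping in the second piece: verifying that the flow change of variable, the inclusion of $e^{\pm tX_w}\Omega$ in $\wt\Omega$, and the volume comparison between $\L^n(B(x,R(t)))$ and $\L^n(B(e^{tX_w}x,R(t)))$ all hold uniformly for the admissible range of $t$ and $x$. Each ingredient is standard (smoothness of the exact flow of $X_w$ for the Jacobian, and the polynomial structure of $\L^n(B(\cdot,r))$ from \eqref{volumone} for doubling), but they must be assembled carefully. Once both pieces are controlled, adding them yields \eqref{hoho}, completing the proof.
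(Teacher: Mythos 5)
Your proof is correct, and it takes a genuinely different route from the paper's. The published argument goes through the Rothschild--Stein lifting: it passes to vector fields $\wt X_1,\dots,\wt X_m$ that are free up to step~$\kappa$ on a set $U\times V\subset\R^N$, applies Lemma~\ref{quarantasei} to convert the increment along $e^{tX_w}$ into a fractional seminorm with kernel $\wt d(\xi,\eta)^{-(Q+ps)}$ in the lifted space, and then uses Lemma~\ref{quattrotre} to project back and recover the kernel $d(x,y)^{-ps}\,\L^n(B(x,d(x,y)))^{-1}$; the local nature of the lifting is why the statement is formulated around a base point $x_0$. Your argument stays entirely downstairs. Its key input is the bound $d(x,e^{tX_w}x)\leq c_0|t|^{1/|w|}$, which you correctly obtain from $\rho(x,e^{tX_w}x)\leq 2|t|^{1/|w|}$ (the observation in Remark~\ref{march}) combined with the local comparability $d\simeq\rho$ (which in this paper is Remark~\ref{effeelle} together with the trivial inequality $\rho\leq d$). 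You then average $|f(e^{tX_w}x)-f(x)|^p$ over the ball $B(x,R(t))$ with $R(t)\simeq|t|^{1/|w|}$, split by the triangle inequality, and use Fubini to trade the $t$-integral for the $(x,y)$-integral, producing exactly the kernel on the right-hand side of~\eqref{hoho}; the second half is reduced to the first by the smooth change of variable $z=e^{tX_w}x$ together with the local doubling of $r\mapsto\L^n(B(\cdot,r))$, which indeed follows from~\eqref{volumone}. The bookkeeping you flag (inclusion $e^{\pm tX_w}\Omega\subset\wt\Omega$, uniform Jacobian bounds, comparability of $\L^n(B(x,R(t)))$ and $\L^n(B(z,R(t)))$) is all routine for $r_0$ small and $x$ in a fixed compact set, as you say. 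What your approach buys: it avoids the lifting machinery and the two auxiliary lemmas entirely, it applies directly to any bounded $\Omega\Subset\wt\Omega$ without the detour through a neighborhood of a point $x_0$ and a subsequent covering, and it places no upper restriction on $s$ (the integral $\int_{T(x,y)}^\infty t^{-1-ps/|w|}\,dt$ converges for every $s>0$). The constant it produces degrades as $s\to 0$ through the factor $|w|/(ps)$, but the paper's route has the same feature.
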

This statement was  proved in \cite{Morbidelli98} 
for commutators $X_w$ of length $|w|=1$. 
Here  we provide  a sketch of the  proof   and we show that the argument works  for any commutator~$X_w$ of any length $1\leq |w|\leq \kappa$. 

The argument is based on the well known 
lifting procedure by Rothschild and Stein,  
which we now
briefly describe.
Let $X_1, \dots, X_m$ be vector fields of step $\kappa$ at any point of $\R^n$.
Let us fix a point $x_0\in\R^n$.  In \cite{RothschildStein} it was proved that there exists a neighborhood  $U\times V$ of $(x_0,0)\in \R^n\times \R^d=:\R^N$ such that on $U\times V\ni(x,\tau)$ we can define new vector fields 
\begin{equation}\label{liftati}
\wt X_j= X_j+\sum_{\b=1}^d a_{j,\beta}(x,\tau)\frac{\p}{\p  \tau_\beta}, \quad\text{where $(x,\tau)\in U\times V$,}
\end{equation}
which are free up to order $\kappa$ in $U\times V$. This means that the only linear relations among commutators of order $\leq \kappa$ of the vector fields $\wt X_j$ have constant coefficients in $U\times V$ and are given by the antisymmetry and the Jacobi identity. Note also that $N$ is the dimension of the    nilpotent  
free  Lie algebra of step $\kappa$ with $m$ generators.  
 However, usually the Lie algebra generated by $\wt X_1,\dots, \wt X_m$ is not nilpotent.

Let us go back to the family $Y_1 , \dots, Y_q $ introduced in the previous section as an enumeration of the family $X_w$ as $1\leq |w|\leq \kappa$. Note incidentally that it is
$q>N$ for vector fields of step $\geq 2$.
% \footnote{As an example, given a pair of step two vector fields $X_1$ and $X_2$ in $\R^3$, and their lifted versions $\wt X_1, \wt X_2$,
% we have $N=3$,  i.e.~$(\wt Y_1, \wt Y_2, \wt Y_3)=(\wt X_1,\wt X_2,[\wt  X_1,\wt X_2]  )$, but $q=4$, because $(\wt Y_1, \wt Y_2, \wt Y_3, \wt Y_4)=(
%  \wt X_1,\wt X_2,[\wt  X_1,\wt X_2] , [\wt  X_2,\wt X_1] ] )$.
%  By antisymmetry, the family $\wt Y_1, \wt Y_2, \wt Y_3, \wt Y_4 $ is linearly dependent. In higher step, we also have more complicated relations coming from the Jacobi identity. TROPPE CHIACCHIERE???} 
For a given $Y_k =X_{w }$ with 
$w=w(k)= w_1w_2 \dots  w_\ell $ belonging to such family we define the lifted commutator $\wt Y_j =[\wt X_{w_1}, [\wt X_{w_2}, \dots ,   [\wt X_{w_{\ell-1}}, \wt X_{w_\ell}] \dots]] $.

Up to reordering the commutators  $\wt Y_1,\dots, \wt Y_q$ we can assume that the subfamily of the first $N$ commutators, 
\begin{equation}\label{gatto}
\wt Y_1=\wt X_1, \wt Y_2= \wt X_2, \dots, \wt Y_m=\wt X_m, \wt Y_{m+1},\dots, \wt Y_{N}
\end{equation}
is linearly independent. 
% Furthermore, for all $X_w$ w
The lifted vector fields define a distance $\wt d$ whose properties are  established in \cite[Lemma~3.2]{NagelSteinWainger} and \cite[Lemma~4.4]{Jerison}. Then we get the
 following lemma.
\begin{lemma}[See { \cite[Lemma 4.3]{Morbidelli98}}]\label{quattrotre} Let $x_0\in \R^n$ and let $U$ and $V$ be the sets arising in the  lifting procedure.
Given   compact sets $E\subset U$ and $ H\subset  V$, there is $\delta_0 > 0 $ and $C>0$ 
 such that for all $x,y\in E$ with $d(x,y)<\delta_0$, we have 
\begin{equation}
\int\limits_{
\{(\tau,\sigma)\in H\times H\,:\, \wt d((x,\tau),(y,\sigma))\leq \delta_0\}}\frac{ d\tau d\sigma }{\wt d((x,\tau), (y,\sigma))^{Q+ps}}\leq
C\frac{1}{d(x,y)^{ps} \L^n(B(x, d(x,y)))}
\end{equation}
where $Q=\ell(\wt Y_1)+\dots + \ell(\wt Y_N)$ is the homogeneous dimension  of the free Carnot     group of step~$\kappa$ with 
$m $ generators. \footnote{In our notation,  
the free Carnot     group of step~$\kappa$ with 
$m $ generators has topological dimension $N$. The homogeneous dimension $Q$ has the property that $\L^N(  B_r) = C r^Q$, where $B_r$ is a Carnot--Carath\'eodory ball of radius~$r>0$ centered at any point.
} 
% (Note also the property $\L^N(\wt B((x,\tau),r))\simeq r^Q$)
\color{black}
\end{lemma}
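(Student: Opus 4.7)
The plan is to reduce the estimate to two ingredients from the Nagel--Stein--Wainger / Jerison lifting theory, then perform a dyadic decomposition of the set $\{\wt d\le\d_0\}$ centered at the natural scale $d(x,y)$, at which point the exponent $Q+ps$ becomes exactly what is needed for a geometric series to converge.

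First, I recall the two facts I will use. The lifting $\wt X_j = X_j + \sum_\b a_{j,\b}\p_{\t_\b}$ is a vertical perturbation of $X_j$, so any horizontal curve for $\wt X_1,\dots,\wt X_m$ projects under $(x,\tau)\mapsto x$ to a horizontal curve of the original system of the same length. Hence
\begin{equation*}
d(x,y)\le \wt d((x,\t),(y,\s))\qquad\text{for all }(x,\t),(y,\s)\in U\times V.
\end{equation*}
Secondly, the ball-box theorem for $\wt X_1,\dots,\wt X_m$ (which are free up to step $\kappa$) combined with Fubini gives the slice estimate
\begin{equation*}
\bigl|\{\s\in V:\wt d((x,\t),(y,\s))\le r\}\bigr|\le C\,\frac{r^{Q}}{\L^{n}(B(x,r))}
\end{equation*}
for $(x,\t)\in E\times H$, $y\in E$, and $d(x,y)\le r\le\d_0$ suitably small; here one uses that the lifted ball $\wt B((x,\t),r)$ has $N$-dimensional Lebesgue measure comparable to $r^{Q}$ and projects into $B(x,r)\times V$, so the average $\s$-slice over $y\in B(x,r)$ has measure $\simeq r^Q/\L^n(B(x,r))$, and a covering / translation argument upgrades this to every admissible $y$. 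Both facts are established in \cite[Lemma~3.2]{NagelSteinWainger} and \cite[Lemma~4.4]{Jerison}.

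With these in hand, fix $x,y\in E$ with $d(x,y)<\d_0$ and set $r_k:=2^{k}d(x,y)$. Because $\wt d\ge d(x,y)$, the domain of integration splits as the disjoint union of annular layers $A_k:=\{(\t,\s)\in H\times H : r_k\le\wt d((x,\t),(y,\s))<r_{k+1}\}$ for $k=0,1,\dots,K$ with $K\simeq\log_2(\d_0/d(x,y))$. On $A_k$ the integrand is bounded by $r_k^{-(Q+ps)}$, and integrating first in $\s$ using the slice estimate and then in $\t\in H$ (which has finite measure) yields
\begin{equation*}
|A_k|\le C\,|H|\,\frac{r_{k+1}^{Q}}{\L^{n}(B(x,r_{k+1}))}.
\end{equation*}
Therefore the contribution of $A_k$ to the integral is at most $C\, r_k^{-ps}\,\L^n(B(x,r_k))^{-1}$, after absorbing the constants from doubling of $r\mapsto\L^n(B(x,r))$.

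It remains to sum in $k$. Since $r_k$ and $\L^n(B(x,r_k))$ are both non-decreasing in $k$, one has
\begin{equation*}
\sum_{k\ge 0}\frac{1}{r_k^{ps}\,\L^n(B(x,r_k))}\le \frac{1}{\L^n(B(x,d(x,y)))}\sum_{k\ge 0}\frac{1}{(2^{k}d(x,y))^{ps}}\le \frac{C}{d(x,y)^{ps}\,\L^n(B(x,d(x,y)))},
\end{equation*}
where the geometric series converges because $ps>0$. This is exactly the claimed bound. The main obstacle is really the slice estimate: one has to justify that the $\s$-fiber of a lifted $\wt d$-ball at a point $y$ satisfying $d(x,y)\le r$ genuinely has measure of the order $r^Q/\L^n(B(x,r))$ uniformly in $y$. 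This is where one leans on the cited Nagel--Stein--Wainger/Jerison lemmas (ball-box for $\wt d$ plus the projection identity), and everything else is dyadic bookkeeping.
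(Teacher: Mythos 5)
Your proof is correct and is essentially the argument the paper defers to: the paper omits the proof of Lemma~\ref{quattrotre} and refers to \cite[Lemma 4.3]{Morbidelli98}, whose proof rests on exactly the two ingredients you isolate, namely the projection inequality $d(x,y)\le \wt d((x,\tau),(y,\sigma))$ and the Nagel--Stein--Wainger/Jerison slice estimate $\abs{\{\sigma: \wt d((x,\tau),(y,\sigma))\le r\}}\le C r^Q/\L^n(B(x,r))$ (using that the lifted fields are free, so $\L^N(\wt B(\xi,r))\simeq r^Q$), followed by a dyadic decomposition in $\wt d$ starting at scale $d(x,y)$. The summation step is fine (monotonicity of $r\mapsto\L^n(B(x,r))$ already suffices, no doubling needed), so the proposal matches the cited proof in substance.
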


 We do not include the proof of Lemma \ref{quattrotre}, whose
 argument is based on \cite[Lemma~3.2]{NagelSteinWainger}   or \cite[Lemma~4.4]{Jerison}. See \cite[Lemma 4.3]{Morbidelli98} for the details.

Then we have the following Lemma.

\begin{lemma}\label{quarantasei} 
Let  $\wt Y_1, \dots,   \wt Y_q$  \color{black} be the commutators  introduced in \eqref{gatto} and let $U\times V\subset \R^n\times\R^d=\R^N$ be the sets appearing in \eqref{liftati}. 
Then, given  $s\in\left]0,1\right[$ and $p \in\left[1,+\infty\right[$, fixed  open sets $G\Subset  \wt G\Subset U\times V$,
 and given a commutator~$X_w$ with $|w|\leq \kappa$, there   are $\d_0>0$ and   $ \alpha>0$ such that  
 \begin{equation*}
  \int_G d\xi \int\limits_{\{|t|< \delta_0^{|w|}  , \;e^{\alpha t X_w}(\xi)\in G\}}
 \frac{dt}{|t|^{1+ps/|w|}}|u(e^{t\alpha X_w}(\xi))-u(\xi)|^p
 \leq C\int\limits_{\wt G\times\wt G} \frac{|u(\xi)-u(\eta)|^p d\xi d\eta}{\wt d(\xi,\eta)^{Q+ps}}.\end{equation*} 
\end{lemma}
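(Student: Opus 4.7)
The plan is to work entirely in the lifted space $\R^N$, exploiting the freeness of the lifted fields $\wt X_1,\dots,\wt X_m$ up to step $\kappa$ from the Rothschild--Stein construction. Freeness forces $|\lambda_{(1,\dots,N)}(\xi)|\ge c>0$ on compact subsets of $U\times V$, so Theorem~\ref{scatola} (applied on $\R^N$) yields the Ahlfors-regularity $\L^N(B_{\wt\rho}(\xi,r))\asymp r^Q$ for $\xi$ in a compact set and $r$ small, the local equivalence $\wt d\asymp\wt\rho$, and the key containment $e^{\alpha t\wt X_w}(\xi)\in B_{\wt\rho}(\xi,C(\alpha|t|)^{1/|w|})$, the latter following directly from the definition of $\wt\rho$ in~\eqref{coscos} since $\wt X_w$ is a commutator of length $|w|$ of the lifted fields.

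The main idea is a \emph{bulk-up/averaging} argument that turns the $(N{+}1)$-dimensional left-hand integral into a $2N$-dimensional one matching the right-hand side. I would first pick $\alpha,\delta_0>0$ small enough that $e^{\alpha t\wt X_w}(\xi)$ remains in a compact subset of $\wt G$ for $\xi\in G$ and $|t|\le\delta_0^{|w|}$. For each such $(\xi,t)$, introduce auxiliary points
\[
\xi'\in B_1:=B_{\wt\rho}(\xi,c|t|^{1/|w|}),\qquad \eta'\in B_2:=B_{\wt\rho}(e^{\alpha t\wt X_w}\xi,c|t|^{1/|w|}),
\]
each ball of $\L^N$-volume $\asymp|t|^{Q/|w|}$. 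Rewriting the constant $|u(\xi)-u(e^{\alpha t\wt X_w}\xi)|^p$ as its own average over $B_1\times B_2$ and invoking the triangle inequality
\[
|u(\xi)-u(e^{\alpha t\wt X_w}\xi)|^p\le C\bigl(|u(\xi)-u(\xi')|^p+|u(\xi')-u(\eta')|^p+|u(\eta')-u(e^{\alpha t\wt X_w}\xi)|^p\bigr)
\]
splits the LHS into three analogous contributions.

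For the middle contribution, Fubini converts the integration into one over $(\xi',\eta')\in\wt G\times \wt G$ with $(\xi,t)$ inner. The constraints $\xi\in B_{\wt\rho}(\xi',c|t|^{1/|w|})$ and $e^{\alpha t\wt X_w}\xi\in B_{\wt\rho}(\eta',c|t|^{1/|w|})$ force $|t|\ge c'\wt d(\xi',\eta')^{|w|}$ via the triangle inequality, while the $\xi$-slice has measure $\le|B_{\wt\rho}(\xi',c|t|^{1/|w|})|\asymp|t|^{Q/|w|}$. Since $|B_1||B_2|\asymp|t|^{2Q/|w|}$, the inner integration becomes
\[
\int_{c'\wt d(\xi',\eta')^{|w|}}^{\delta_0^{|w|}}\frac{|t|^{Q/|w|}\,dt}{|t|^{1+ps/|w|}\,|t|^{2Q/|w|}}=\int_{c'\wt d(\xi',\eta')^{|w|}}^{\delta_0^{|w|}}\frac{dt}{|t|^{1+(Q+ps)/|w|}}\asymp\frac{1}{\wt d(\xi',\eta')^{Q+ps}},
\]
matching the RHS integrand exactly. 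The two remaining terms $|u(\xi)-u(\xi')|^p$ and $|u(\eta')-u(e^{\alpha t\wt X_w}\xi)|^p$ are independent of $\eta'$ (resp.\ $\xi'$); integrating out the dummy variable over the corresponding ball cancels one factor of $|t|^{Q/|w|}$, and then (after a bounded-Jacobian change of variables $\xi\mapsto e^{\alpha t\wt X_w}\xi$ in the third term) the very same $t$-integration produces identical bounds.

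The principal obstacle is the careful bookkeeping of four powers of $|t|$: the two ball volumes $|t|^{Q/|w|}$, the weight $|t|^{1+ps/|w|}$, and the $\xi$-slice measure $|t|^{Q/|w|}$. These must combine into exactly the exponent $1+(Q+ps)/|w|$, which paired with the lower limit $|t|\gtrsim\wt d^{|w|}$ integrates precisely to the critical singularity $\wt d^{-(Q+ps)}$. This is where the homogeneous dimension $Q=\ell_1+\cdots+\ell_N$ and the freeness of the lifted fields are essential: they guarantee $\L^N(B_{\wt\rho}(\xi,r))\asymp r^Q$ uniformly in $\xi$, which is what allows the inner integration to collapse to the right weight without extraneous logarithmic losses.
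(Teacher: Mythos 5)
Your proposal is correct, and it follows a genuinely different route from the paper's. The paper first replaces the one-dimensional $t$-integral by an $N$-dimensional $h$-integral via the equivalence
\[
\int_{|h_j|\le\delta_0^{\ell_j}}\frac{\psi(h_j)\,dh_j}{|h_j|^{1+ps/\ell_j}}\asymp\int_{\|h\|<\delta_0}\frac{\psi(h_j)\,dh}{\|h\|^{Q+ps}},
\]
then uses the single $N$-dimensional exponential $\wt\Phi_\xi(h)=e^{h_1\wt Y_1+\cdots+h_N\wt Y_N}(\xi)$ as the intermediate point in a \emph{two}-term triangle inequality and changes variables $h\mapsto\eta=\wt\Phi_\xi(h)$, after first rearranging the basis so that $\wt X_w=\wt Y_j$ with $\ell_j=|w|$, $j\le N$. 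You instead keep the one-dimensional $t$-integral, average over two small $\wt\rho$-balls around $\xi$ and $e^{\alpha t\wt X_w}\xi$, and use a \emph{three}-term triangle inequality followed by Fubini, with the $t$-integral $\int_{c'\wt d(\xi',\eta')^{|w|}}^{\delta_0^{|w|}}|t|^{-1-(Q+ps)/|w|}\,dt$ collapsing to the critical weight $\wt d(\xi',\eta')^{-(Q+ps)}$. Your version is slightly more self-contained --- it avoids invoking the radial-integral equivalence lemma from \cite{Morbidelli98} and does not actually require rearranging the basis so that $\wt X_w$ is among $\wt Y_1,\dots,\wt Y_N$ (the containment $e^{\alpha t\wt X_w}\xi\in B_{\wt\rho}(\xi,|\alpha t|^{1/|w|})$ uses only that $\wt X_w$ is one of the $\wt Y_k$) --- at the cost of tracking four powers of $|t|$ by hand; the paper's version is slicker because the equivalence lemma absorbs that bookkeeping. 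Both rest on the same structural facts: freeness of the lifted fields, hence $\L^N(B_{\wt\rho}(\xi,r))\asymp r^Q$ (which the paper uses implicitly through the uniform Jacobian bounds for $\wt\Phi_\xi$), the local comparability $\wt\rho\asymp\wt d$, and the smallness of $\alpha,\delta_0$ to keep the flow inside $\wt G$. When you write it up, do make the measurability of the Fubini regions and the containments $B_1,B_2\subset\wt G$ explicit, and note that the bounded-Jacobian substitution $\xi\mapsto e^{\alpha t\wt X_w}\xi$ in your third term plays exactly the role of the paper's estimate for its second term.
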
  
In the statement we denoted with $\xi=(x,\tau)$ variables in the lifted space
$\R^n\times\R^d$.

\begin{proof}
 [Sketch of the proof of Lemma~\ref{quarantasei}] The argument is similar to the one appearing in \cite[Lemma~4.4]{Morbidelli98}.     We sketch it, because there are some slight differences of notation and because here we consider commutators of any step. 
 First of all, 
 in view of the discussion in \cite[p.~272]{RothschildStein},
we can rearrange the choice of the basis $\wt Y_1, \dots, \wt Y_N$ in~\eqref{gatto}  in such a way that    $\wt X_w = 
 \wt Y_j$ for some $j\in \{1,\dots, N\}$ with $\ell_j=|w|$. Denote by $Q:=\sum_{j=1}^N \ell_j$ the homogeneous dimension.
 Fix then an open set $G^*$ such that $G\Subset G^*\Subset \wt G$ and define the exponential map $\wt\Phi_\xi(h):=\exp(h_1\wt Y_1+\cdots + h_N\wt Y_{N})(\xi)$. Taking $\d_0$ small enough we may assume that $\wt\Phi_\xi(h)\in G^*$  if $\|h\|:=\max_{j\leq N}|h_j|^{1/\ell_j}<\delta_0 $. In particular, if $\alpha\in[0,1]$ we also have   $e^{\alpha h_j \wt Y_j}\xi\in G^*$,  if $\|h\| <\delta_0 $.
\color{black} 
  Let us start from the inequality~(44) in~\cite{Morbidelli98},
 which reads as 
 \begin{equation*}
\begin{aligned}
 \int_G d\xi &
 \int\limits_{\{|t|< \delta_0^{|w|}  , \;e^{\alpha t \wt X_w}(\xi)\in G\}}
 \frac{dt}{|t|^{1+ps/|w|}}|u(e^{t\alpha \wt X_w}(\xi))-u(\xi)|^p
 \\&
 = \int_G d\xi
 \int\limits_{\{|h_j| < \delta_0^{\ell_j }  , \;e^{\alpha h_j \wt Y_j}(\xi)\in  G \color{black}\}}
 \frac{dh_j}{|h_j|^{1+ps/|w|}}|u(e^{h_j\alpha \wt Y_j} (\xi))-u(\xi)|^p
 \\&
\leq C 
 \int_G d\xi \int_{\|h\|\leq \delta_0}  
 \frac{  dh}{ \|h\|^{Q+ps}} |f(e^{h_j\a\wt Y_j}\xi)- f(\xi)|^p =:(*).
%  \\&
\end{aligned}
 \end{equation*}
 We used the equivalence 
 \begin{equation}
  \int_{|h_j|\leq \delta_0^{\ell_j}}\frac{\psi(h_j) dh_j}{|h_j|^{1+ps/\ell_j}}\simeq 
  \int_{\|h\|<\delta_0}\frac{\psi(h_j) dh}{\|h \|^{Q+ps}},
 \end{equation} 
 valid for all $\psi=\psi(h_j)$ nonnegative and measurable, with $h=(h_1, \dots, h_N)$, $\|h\|:=\max_{k=1,\dots, N}|h_k|^{1/\ell_k}$. See \cite[(40) and~(41)]{Morbidelli98}.
 To conclude the proof, introducing the exponential map $\wt\Phi_\xi(h)=e^{h_1 \wt Y_1 +\cdots+
 h_N\wt Y_N}(\xi)$, by the triangle inequality, we have 
 \begin{equation*}
\begin{aligned}
(*)\leq \int_G d\xi &\int_{\|h\|\leq \delta_0}  
 \frac{  dh}{ \|h\|^{Q+ps}} |f(\wt\Phi_\xi(h))- f(\xi)|^p 
 \\+
 &\int_G d\xi  \int_{\|h\|\leq \delta_0}  
 \frac{  dh}{ \|h\|^{Q+ps}} |f(\wt\Phi_\xi(h))-f(e^{h_j\a\wt Y_j}\xi) |^p .
\end{aligned}
%  \begin{aligned}
% \end{aligned}
 \end{equation*}
To estimate the first term we just use the change of variable $h\mapsto  \wt\Phi_\xi(h) =:\eta$, which is nonsingular because the vector fields $\wt Y_1,\dots, \wt Y_N$ are linearly independent. 
 To estimate the second one, we must choose a sufficiently small  $\alpha>0$ so that, roughy speaking, $e^{h_j\a\wt Y_j}(\xi)$ stays rather close to  $\xi$ and the second term admits an analogous estimate
 \begin{equation}
  (*)\leq \int_{\wt G\times\wt G}
  \frac{|u(\xi)-u(\eta)|^p}{\wt d(\xi,\eta)^{Q+ps}}d\xi d\eta.
 \end{equation} 
 See~\cite{Morbidelli98} for a detailed explaination.  
\end{proof}

\begin{proof}[Proof of Lemma \ref{pippirillo}] We follow the argument of the proof of Proposition~4.2 in~\cite[p.~237-238]{Morbidelli98}. Let $X_1, \dots, X_m$ be a family of H\"ormander vector fields of step~$\kappa\in\N$. Let us choose  a 
commutator $X_w $ with length~$|w|\leq \kappa$. Fix $x_0\in\R^n$ and introduce the vector fields $\wt X_1,\dots, \wt X_m$ on the sets $U\times V\subset\R^N $ appearing in~\eqref{liftati} and~\eqref{gatto}. Here $N$ and $Q$ are the topological and homogeneous dimension of the free Carnot group of step $\kappa$ with $m$ generators. By properties of free Lie algebras, see \cite[p.~272]{RothschildStein}, we may choose the vector fields $\wt Y_1,\dots, \wt Y_N$ in~\eqref{gatto}  in the family $\wt Y_1,\dots \wt   Y_q$ assuming that $\wt X_w =\wt  Y_j$ for some $j\leq N$ with $\ell_j=|w|$.

We apply first Lemma~\ref{quarantasei} to a set of the form $G=O\times H\Subset  \wt O\times\wt H\Subset U\times V$, where $\wt H$ is a small open neighborhood of the origin in $\R^{N-n}$. Recall also that the function $f$ does not depend on the additional variables in $V$. Then we get the  inequality
\begin{equation*}
\begin{aligned}
 &\int\limits_O dx\int\limits_{  
 |t|<\delta_0^{|w|},\;    e^{\alpha tX_w}x\in O  
 }\frac{dt}{|t|^{1+ps/|w|}}|f(e^{\alpha t X_w}(x))-u(x)|^p
 \\ &\qquad \leq
 C\int\limits_{ \wt O\times\wt H   }  dxd\t \int\limits_{ \wt O\times\wt H   } dyd\s \frac{|f(x )-f(y )|^p  }{\wt d((x,\t),(y,\s))^{Q+ps}}
 \\&\qquad 
 = C
 \int\limits_{ \wt O\times\wt O    }  dxdy |f(x )-f(y )|^p
 \int\limits_{ \wt H\times\wt H   }
 d\t d\s \frac{1  }{\wt d((x,\t),(y,\s))^{Q+ps}}
\\&
\qquad \leq C \int\limits_{ \wt O\times\wt O    }  
dxdy \frac{|f(x )-f(y )|^p}{d(x,y)^{ps}\L^n(B(x, d(x,y)))},
 \end{aligned}
\end{equation*}
by Lemma~\ref{quattrotre}.  

Covering any given open bounded $\Omega$ set with a finite family of open sets $O$ of the form 
appearing in the discussion above, we  obtain the proof of  the    inequality~\eqref{hoho} on~$\Omega$.
\end{proof}

\footnotesize

\def\cprime{$'$} \def\cprime{$'$}
\providecommand{\bysame}{\leavevmode\hbox to3em{\hrulefill}\thinspace}
\providecommand{\MR}{\relax\ifhmode\unskip\space\fi MR }
% \MRhref is called by the amsart/book/proc definition of \MR.
\providecommand{\MRhref}[2]{%
  \href{http://www.ams.org/mathscinet-getitem?mr=#1}{#2}
}
\providecommand{\href}[2]{#2}

\end{document}